\title{Universally measurable subgroups of countable index}
\author {Christian Rosendal}
\address{Department of Mathematics, Statistics, and Computer Science (M/C 249)\\
University of Illinois at Chicago\\
851 S. Morgan St.\\
Chicago, IL 60607-7045\\
USA}
\email{rosendal@math.uic.edu}
\urladdr{http://www.math.uic.edu/$_~$rosendal}
\date {November 2008}
\newcommand {\N}{\mathbb N}
\newcommand{\om}{\omega}
\newcommand{\tom} {\emptyset}
\newcommand{\equi}{\Leftrightarrow}
\newcommand{\til}{\rightarrow}
\newcommand {\del}{ \; \big| \;}
\newcommand{\inv}{^{-1}}
\renewcommand {\a} {\forall}
\newtheorem{thm}{Theorem}
\newtheorem{lemme}[thm]{Lemma}
\newtheorem{prob}[thm]{Problem}
\begin{document}

\thanks{The author was partially supported by NSF grant DMS 0556368.}

\subjclass[2000]{03E15, 43A05}

\keywords{Universally measurable subgroup, automatic continuity, Haar null sets}

\maketitle
\begin{abstract}
It is  proved that any countable index, universally measurable subgroup of a Polish group is open. By consequence, any universally measurable homomorphism from a Polish group into the infinite symmetric group $S_\infty$ is continuous. It is also shown that a universally measure homomorphism from a Polish group into a second countable, locally compact group is necessarily continuous.
\end{abstract}

The present work is motivated by an old problem of J.P.R. Christensen, which asks whether any universally measurable homomorphism between Polish groups is continuous. To fix the terminology, let us recall that a {\em Polish space} is a separable topological space whose topology can be induced by a complete metric. Also, a topological group is {\em Polish} in case its topology is Polish. A subset $A$ of a Polish space $X$ is said to be {\em Borel} if it belongs to the $\sigma$-algebra generated by the open sets and $A$ is {\em universally measurable} if it is measurable with respect to any Borel probability (or equivalently, $\sigma$-finite) measure on $X$, i.e., if for any Borel probability measure $\mu$ on $X$, $A$ differs from a Borel set by a set of $\mu$-measure $0$. The class of universally measurable sets clearly forms a $\sigma$-algebra, but contrary to the class of Borel sets, there is no ``algebraic'' procedure for generating the universally measurable sets from the open sets. Thus, the extent of this class is somewhat elusive and largely depends on additional set theoretical axioms. 

A function $f\colon X\til Y$ between Polish spaces $X$ and $Y$ is said to be {\em Borel measurable}, resp. {\em universally measurable}, if $f\inv (U)$ is Borel, resp. universally measurable, in $X$ for all open $U\subseteq Y$. A classical result due to H. Steinhaus and A. Weil states that if $G$ is a second countable, locally compact group (and hence Polish) and $H$ is a Polish group, then any universally measurable homomorphism $\pi\colon G\til H$ is continuous. Actually, for this it suffices that $\pi$ is measurable with respect to left  or right Haar measure on $G$.  However, this result relies heavily on the translation invariance of Haar measure, and, as any Polish group with a non-zero, quasi-invariant, $\sigma$-finite Borel measure is necessarily locally compact, the proof gives no indication whether the result should hold for general Polish $G$. Nevertheless, in the late 1960's, J.P.R. Christensen \cite{christensen} (see also \cite{christensen3}) introduced a notion of Haar null sets in more general Polish groups and was able to use this to prove an analogue of the Steinhaus--Weil result for Abelian Polish groups $G$. In fact, a consequence of Christensen's proof is that any universally measurable homomorphism from a Polish group $G$ into a Polish group $H$, where $H$ admits a compatible two-sided invariant metric, is continuous. In particular, this applies to the case when $H$ is Abelian, compact or countable discrete. It immediately follows from this that if $G$ is Polish and $N\leqslant G$ is a countable index,  universally measurable, {\em normal} subgroup, then $N$ is open in $G$. To see this, one just considers the quotient mapping $\pi\colon G\til G/N$, where $G/N$ is taken discrete.   Recently, S. Solecki \cite{solecki} was able to prove similar results without assumptions on $H$, but where, on the other hand, the domain group $G$ belongs to a class of Polish groups called {\em amenable at $1$}. However, the general question of whether any universally measurable homomorphism between Polish groups remains open. 

In this paper, we shall provide two extensions of Christensen's result by instead weakening the conditions on the range group $H$. Namely, we show that if $H$ either is second countable, locally compact or has a neighbourhood basis at $1$ consisting of open subgroups (equivalently, if $H$ embeds into $S_\infty$), then any universally measurable homomorphism $\pi\colon G\til H$ from a Polish group $G$  into $H$ is continuous. 

We recall that $S_\infty$ is the group of all permutations of $\N$, which we give the Polish topology whose basic open sets are
$$
\{g\in S_\infty\del g(n_i)=m_i, \; i\leqslant k\},
$$
where $n_1,\ldots,n_k, m_1,\ldots, m_k$ are natural numbers. Also, we say that a subset $A$ of a Polish group $G$ admits a {\em perfect set of disjoint left translates} if there is a non-empty perfect set $C\subseteq G$ such that for all $x\neq y\in C$ we have $xA\cap yA=\tom$.  Similarly, we say that a subgroup $K$ of $G$ has {\em perfect index} if it admits a perfect set of disjoint left translates (in particular, $[G\colon K]=2^{\aleph_0}$). Finally, a universally measurable subset $A$ of a Polish group $G$ is said to be {\em right Haar null} if there is a Borel probability measure $\mu$ on $G$ such that for all $x\in G$ we have $\mu(Ax)=0$. Similarly for right translates and left Haar null.
With this we can now state our main results.

\begin{thm}\label{main}
Let $G$ be a Polish group and $F\leqslant G$ a universally measurable subgroup. Then $F$ either is open in $G$ or has perfect index. By consequence, any universally measurable homomorphism from a Polish group into $S_\infty$ is continuous.
\end{thm}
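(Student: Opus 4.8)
The plan is to deduce the displayed consequence from the dichotomy and then to prove the dichotomy by reducing it, via Mycielski's theorem, to a single Steinhaus-type statement about universally measurable subgroups, which is the substantial point.

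For the consequence, the pointwise stabilizers $V=\{g\in S_\infty\del g(n_i)=m_i,\ i\leqslant k\}$ of finite tuples form a neighbourhood basis at $1$ in $S_\infty$, and each has countable index (its index being the cardinality of the orbit of $(n_1,\ldots,n_k)$, which is countable). So if $\pi\colon G\til S_\infty$ is universally measurable, then each $\pi\inv(V)$ is a universally measurable subgroup of $G$ of countable index; by the dichotomy it cannot have perfect index — a subgroup of perfect index has index $2^{\aleph_0}$ — hence it is open, and as the $V$'s form a neighbourhood basis, $\pi$ is continuous.

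For the dichotomy, suppose $F$ is not open; since every subgroup of a discrete group is open we may assume $G$ is not discrete, hence perfect. We must produce a non-empty perfect set $C\subseteq G$ with $xF\cap yF=\tom$ for all distinct $x,y\in C$. Put $R=\{(x,y)\in G\times G\del x\inv y\in F\}$, the preimage of $F$ under the continuous open map $\varphi\colon(x,y)\mapsto x\inv y$; then $R$ is universally measurable, its vertical sections are $R_x=xF$, and a non-empty perfect $C$ is as required exactly when distinct elements of $C$ are pairwise $R$-inequivalent. I claim $F$ is either meager or left Haar null. Granting this, there are two cases. If $F$ is meager, then $R=\varphi\inv(F)$ is meager (preimages of meager sets under continuous open maps are meager), and the classical category form of Mycielski's theorem yields the desired $C$. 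If $F$ is left Haar null, fix a Borel probability $\mu$ on $G$ with $\mu(xF)=0$ for all $x$; since $x\in xF$, $\mu$ is atomless, all vertical sections $R_x=xF$ are $\mu$-null, so $(\mu\times\mu)(R)=0$ by Fubini (valid as $R$ is universally measurable), and the measure form of Mycielski's theorem again yields $C$. In both cases $C$ witnesses that $F$ has perfect index.

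So everything reduces to the claim, equivalently: a universally measurable subgroup $F\leqslant G$ that is non-meager and not left Haar null is open. If $F$ has the Baire property this is immediate from Pettis's theorem, since then $F=FF\inv$ contains a neighbourhood of $1$ and is open. The difficulty — and the main obstacle — is to argue using only universal measurability, without the Baire property and in the absence of a translation-invariant $\sigma$-finite measure on $G$. The starting point I would use is that if $F$ is not left Haar null then every atomless Borel probability measure $\mu$ of full support is concentrated on countably many left cosets of $F$: if not, the set $Z_\mu=\{x\del\mu(xF)>0\}$, a countable union of cosets, has $\mu(Z_\mu)<1$, and then the normalised restriction of $\mu$ to $G\setminus Z_\mu$ witnesses — contrary to hypothesis — that $F$ is left Haar null. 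One then pushes such measures around by group elements and combines this "countable concentration" with the non-meagerness of $F$ in a fusion argument to force $1$ into the interior of $F$. Producing that argument — extracting topological largeness of $F$ from universal measurability together with failure of Haar-nullness — is, I expect, the genuinely hard step.
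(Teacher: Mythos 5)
Your derivation of the $S_\infty$ consequence is correct and matches the paper's, and your Mycielski/Fubini argument is essentially the paper's Lemma \ref{mycielski}(2) specialised to the case $A=B=F$ (legitimate here, since $FF\inv=F$ is itself universally measurable, so no external set $B$ is needed). Your ``countable concentration'' observation is also correct as far as it goes. But the proposal has a genuine and acknowledged gap: you reduce the theorem to the claim that a universally measurable subgroup which is non-meager and not left Haar null is open, and then you do not prove that claim --- you explicitly describe it as ``the genuinely hard step'' and offer only a vague plan (``pushes such measures around \ldots in a fusion argument''). That claim \emph{is} the theorem, up to the easy Mycielski reduction, so the substantial content is missing. (The meager/non-meager case split you introduce is also superfluous: the paper's argument shows that ``not right Haar null'' alone already forces openness, so meagerness never needs to be considered.)

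For comparison, the paper closes exactly this gap as follows. If $F$ does not have perfect index, then by Lemma \ref{mycielski} it is not right Haar null, so Theorem \ref{christensen2} (a Steinhaus-type theorem for non--Haar-null universally measurable sets, proved by amending Christensen's argument) yields conjugates $g_1\inv Fg_1\cup\ldots\cup g_n\inv Fg_n$ forming a neighbourhood of $1$. A combinatorial argument then shows $F$ has countable index; hence $K=g_1\inv Fg_1\cap\ldots\cap g_n\inv Fg_n$ has countable index and $\overline K$ is open by Baire category; Neumann's Lemma gives an $i$ with $g_i\inv Fg_i\cap\overline K$ of finite index in $\overline K$; Poincar\'e's lemma produces a finite index, normal, universally measurable subgroup of $\overline K$, which is open by Christensen's theorem on normal subgroups (the case already known from homomorphisms into discrete groups); and openness of $F$ follows. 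None of this machinery --- the non-Haar-null Steinhaus theorem, the countable-index argument, Neumann, Poincar\'e, or the reduction to the normal case --- appears in your proposal, so the proof is incomplete at its central point.
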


\begin{thm}\label{main2}
Let $\pi\colon G\til H$ be a universally measurable homomorphism from a Polish group $G$ into a second countable, locally compact group $H$. Then $\pi$ is continuous.
\end{thm}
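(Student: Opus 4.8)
The plan is to combine Theorem~\ref{main} with the structure theory of locally compact groups in order to reduce, in stages, to the case where $H$ is a connected Lie group, and then to handle that case through the classification of connected Lie groups. As $\pi$ is a homomorphism, it suffices throughout to show that $\pi\inv(W)$ is a neighbourhood of $1_G$ for every neighbourhood $W$ of $1_H$.

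\emph{Reduction to connected Lie groups.} Let $H^0$ be the connected component of the identity in $H$; it is closed and normal, and $H/H^0$ is a second countable, totally disconnected, locally compact group, so by van Dantzig's theorem it has a neighbourhood basis at $1$ consisting of compact open subgroups, and in particular embeds into $S_\infty$. Hence $G\til H/H^0$ is a universally measurable homomorphism into $S_\infty$ and is therefore continuous by Theorem~\ref{main}. Choosing a compact open subgroup $U\leqslant H/H^0$ with (open) preimage $H_1\leqslant H$, the set $\pi\inv(H_1)$ is open in $G$; replacing $G$ and $H$ by $\pi\inv(H_1)$ and $H_1$, we may assume that $H/H^0$ is compact, i.e.\ that $H$ is almost connected. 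By the Gleason--Yamabe theorem such an $H$ is a projective limit of Lie groups, and using second countability we may fix compact normal subgroups $N_0\geqslant N_1\geqslant\cdots$ with $\bigcap_iN_i=\{1\}$ and each $H/N_i$ a Lie group; the preimages of neighbourhoods of $1$ under the quotient maps $H\til H/N_i$ then form a neighbourhood basis at $1_H$. So it is enough to show each $G\til H/N_i$ is continuous, and we may assume $H$ is a Lie group. Being almost connected and second countable it has finitely many components, so $H^0$ is open of finite index and $\pi\inv(H^0)$ is a universally measurable subgroup of $G$ of finite --- hence non-perfect --- index, which is therefore open by Theorem~\ref{main}. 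Passing to it, we reduce to the case where $H$ is a connected Lie group.

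\emph{Abelian and compact factors.} I would then use the decomposition of a connected Lie group $H$ by its closed connected solvable radical $R$, with $H/R$ semisimple. A connected solvable Lie group carries a finite series of closed normal subgroups with successive quotients isomorphic to $\R^a\times\T^b$, and a connected semisimple Lie group is, modulo its discrete centre, a direct product of connected simple Lie groups, each either compact or non-compact. The abelian quotients $\R^a\times\T^b$, the compact simple factors, and the discrete centre are all dealt with at once by Christensen's theorem recalled in the introduction, since abelian, compact, and discrete groups carry a compatible two-sided invariant metric; these pieces are assembled by an extension argument for topological groups, with Theorem~\ref{main} invoked at each stage to keep the relevant preimages of open subgroups open.

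\emph{The main obstacle.} What remains --- and what I expect to be the genuinely hard part --- is the case of a non-compact, connected, essentially simple Lie group $H$. Such an $H$ has no small subgroups, so the argument used for $S_\infty$ does not apply directly; instead I would exploit that $H$ acts faithfully and highly transitively on associated compact homogeneous spaces --- for instance, in the rank-one case, simply transitively on the bundle of positively oriented frames over its visual boundary --- so that, for the induced universally measurable action of $G$, the subgroup $\ker\pi$ appears as a point stabiliser. By Theorem~\ref{main}, $\ker\pi$ is then either open, in which case $\pi$ is locally constant and hence continuous, or of perfect index. Ruling out the latter alternative is the crux: given a perfect set $C$ of coset representatives on which $\pi$ is injective, one applies Lusin's theorem to a continuous measure on $C$ to obtain a compact $K\subseteq C$ of positive measure on which $\pi$ is continuous, hence a topological embedding of $K$ into $H$, and then pushes forward to $H$ and uses the Steinhaus--Weil property of Haar measure on the Lie group $H$ to contradict the injectivity of $\pi$ on $C$. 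Carrying out this final step --- in effect using the local compactness of $H$ to supply the Steinhaus-type property that the arbitrary Polish group $G$ may lack --- is the step I anticipate to be the principal difficulty.
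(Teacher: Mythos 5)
Your reduction steps (van Dantzig and Theorem \ref{main} for $H/H^0$, Gleason--Yamabe, finitely many components) are individually sound, but the proposal has two genuine gaps, the second of which is fatal. First, the ``assembly'' of extensions is not a formal consequence of continuity on the pieces: knowing that $G\til H/N$ is continuous and that $\pi\restriction\pi\inv(N)\colon \pi\inv(N)\til N$ is continuous does \emph{not} imply that $\pi\colon G\til H$ is continuous (consider $\pi(x,y)=(x+f(y),y)$ on $\R^2$ with $f$ additive and discontinuous: both induced pieces are continuous, yet $\pi$ is not). The natural repair via a continuous local section $s$ of $H\til H/N$ produces a ``defect'' map $g\mapsto \pi(g)s(q(g))\inv$ into $N$ which is universally measurable but not a homomorphism, so Christensen's theorem does not apply to it. Second, and decisively, your treatment of the non-compact simple case cannot work: the dichotomy of Theorem \ref{main} applied to $\ker\pi$ gives nothing there, because ``$\ker\pi$ has perfect index'' is entirely consistent with continuity of $\pi$ --- the identity map of $PSL_2(\R)$ has trivial kernel, which admits a perfect set of disjoint translates. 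So the alternative you propose to ``rule out'' actually occurs for continuous injective homomorphisms, and the Lusin/Steinhaus step produces no contradiction: a continuous injection on a compact set is simply a topological embedding. Theorem \ref{main} only has traction when openness of a \emph{countable-index} subgroup settles the matter, which is exactly why groups with no small subgroups resist this approach.

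For comparison, the paper's proof avoids all structure theory of locally compact groups and is quite short. One fixes a symmetric compact exhaustion $K_0\subseteq K_1\subseteq\cdots$ of $H$ with $K_nK_n\subseteq K_{n+1}$ and sets $A_n=\pi\inv(K_n)$; Christensen's covering theorem (Theorem \ref{christensen}) applied to this countable universally measurable cover of $G$ shows that some $A_{m+2}$ is a neighbourhood of $1_G$. Then, given a neighbourhood $V\ni 1_H$, compactness of $K_{m+2}$ yields a smaller $U\ni 1_H$ with $hUU\inv h\inv\subseteq V$ for all $h\in K_{m+2}$; since $\pi\inv(U)$ covers $G$ by countably many translates it is not right Haar null, so Theorem \ref{christensen2} (with conjugators taken inside $A_{m+2}$) produces a neighbourhood $W\ni 1_G$ with $\pi(W)\subseteq V$. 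The local compactness of $H$ enters only through $\sigma$-compactness and the uniform conjugation estimate on the compact set $K_{m+2}$ --- this is the Steinhaus-type leverage you were looking for, but it is applied upstream in $G$ via Theorems \ref{christensen} and \ref{christensen2} rather than downstream via Haar measure on $H$.
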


We begin by first considering the connection between right Haar null sets and the number of disjoint translates. For this we need the following result.

\begin{thm}[J. Mycielski \cite{mycielski}]\label{myc}
Let $X$ be an uncountable Polish space equipped with a non-zero,  $\sigma$-finite, continuous Borel
measure $\mu$ and suppose that $R\subseteq X^2$ is a binary relation on $X$ of
$\mu\times \mu$-measure $0$. Then there is a Cantor set $C\subseteq X$ such
that
$$
\a x\neq y\in C\;\; \;\;\;(x,y)\notin R.
$$
\end{thm}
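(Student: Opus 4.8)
The plan is to run a Cantor-scheme (fusion) argument. After some harmless reductions — enlarge $R$ to a Borel superset of $\mu\times\mu$-measure $0$, then replace $R$ by $R\cup R\inv$ (still null, by symmetry of the product measure), and replace $\mu$ by $A\mapsto\mu(A\cap B)$ for a Borel $B$ with $0<\mu(B)<\infty$ (still a non-zero continuous Borel measure making $R$ null) — we may assume $R$ is Borel and symmetric and $\mu$ is finite. Fix a complete compatible metric $d$ on $X$ and put $S=\mathrm{supp}(\mu)$; since $X$ is second countable, $\mu(X\setminus S)=0$, and since $\mu$ is continuous, $\mu\times\mu(\Delta)=\int_X\mu(\{x\})\,d\mu(x)=0$, where $\Delta$ is the diagonal of $X^2$. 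I would then construct non-empty open sets $U_s$, $s\in\cantor$, with $\mu(U_s)>0$ and, for all $s$: (i) $\overline{U_{s\con 0}}\cup\overline{U_{s\con 1}}\subseteq U_s$ and $\overline{U_{s\con 0}}\cap\overline{U_{s\con 1}}=\tom$; (ii) $\mathrm{diam}_d(U_s)\leqslant 2^{-|s|}$; and (iii) $(\overline{U_{s\con 0}}\times\overline{U_{s\con 1}})\cap R=\tom$, whence also $(\overline{U_{s\con 1}}\times\overline{U_{s\con 0}})\cap R=\tom$ by symmetry of $R$.

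Granting this scheme, let $C=\bigcap_n\bigcup_{s\in 2^n}\overline{U_s}$. By (i) and (ii), for each $\alpha\in 2^\om$ the nested non-empty closed sets $\overline{U_{\alpha\begr n}}$ have diameters tending to $0$, hence shrink to a single point $f(\alpha)$; the resulting map $f\colon 2^\om\til X$ is a continuous injection (injectivity from the disjointness in (i)), so $C=f[2^\om]$ is a Cantor set. If $x=f(\alpha)\neq y=f(\beta)$ and $n$ is least with $\alpha(n)\neq\beta(n)$, then, with $s=\alpha\begr n$, the points $x$ and $y$ lie in $\overline{U_{s\con 0}}$ and $\overline{U_{s\con 1}}$ in one order or the other, so $(x,y)\notin R$ by (iii) and its symmetric counterpart. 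Thus $C$ is as desired, and everything reduces to the single recursive step of splitting a node.

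That step is: given a non-empty open $U$ with $\mu(U)>0$ and any $\eps>0$, find disjoint non-empty open $V_0,V_1$ with $\overline{V_0}\cup\overline{V_1}\subseteq U$, $\overline{V_0}\cap\overline{V_1}=\tom$, $\mu(V_0),\mu(V_1)>0$, $\mathrm{diam}_d(V_i)\leqslant\eps$, and $(\overline{V_0}\times\overline{V_1})\cap R=\tom$; iterating it — with $\eps=2^{-|s|-1}$ at node $s$, starting from $U_\tom$ a ball of positive measure and $d$-diameter $\leqslant 1$ about a point of $S$ — yields the scheme. The one genuinely delicate point, and essentially the only place the hypothesis on $R$ is used, is that $R$ is merely null, not closed, so a pair $(x_0,x_1)\notin R$ need not have any neighbourhood disjoint from $R$. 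I would get around this via outer regularity of the finite Borel measure $\mu\times\mu$: since $R\cap(U\times U)$ is null, pick an open $O$ with $R\cap(U\times U)\subseteq O\subseteq U\times U$ and $\mu\times\mu(O)<\mu(U)^2$, and replace $O$ by $O\cap O\inv$ to make it symmetric. Then $(U\times U)\setminus O$ is disjoint from $R$, and using $\mu(U\setminus S)=0$ together with $\mu\times\mu(\Delta)=0$,
\[
\mu\times\mu\bigl(((U\cap S)\times(U\cap S))\setminus(O\cup\Delta)\bigr)\;\geqslant\;\mu(U)^2-\mu\times\mu(O)\;>\;0,
\]
so I may choose $(x_0,x_1)$ in this set. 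Then $x_0\neq x_1$, both lie in $S$, and $(x_0,x_1)$ belongs to the open set $U^2\setminus O$, hence has a basic open box $W_0\times W_1\ni(x_0,x_1)$ disjoint from $O$. Shrinking, choose open $V_i\ni x_i$ with $\overline{V_i}\subseteq W_i\cap U$, $\overline{V_0}\cap\overline{V_1}=\tom$, and $\mathrm{diam}_d(V_i)\leqslant\eps$; since $x_i\in S$, $\mu(V_i)>0$, and $\overline{V_0}\times\overline{V_1}\subseteq W_0\times W_1$ is disjoint from $O\supseteq R$. This completes the step — and with it the proof — the remaining parts being the routine bookkeeping of a fusion construction.
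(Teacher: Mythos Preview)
The paper does not prove this theorem; it is quoted from Mycielski \cite{mycielski} and used as a black box in the proof of Lemma \ref{mycielski}. So there is nothing in the paper to compare against, and I simply assess your argument.

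There is a genuine gap in the splitting step. You pick an \emph{open} set $O\supseteq R\cap(U\times U)$ by outer regularity and then write that ``$(x_0,x_1)$ belongs to the open set $U^2\setminus O$''. But $U^2\setminus O$ is the complement of an open set in an open set, hence relatively closed in $U^2$, not open; there is no reason an open box around $(x_0,x_1)$ should miss $O$, and your construction of $V_0,V_1$ breaks down. This is not a cosmetic slip: your condition (iii) --- that $(\overline{U_{s\con 0}}\times\overline{U_{s\con 1}})\cap R=\tom$ with the $U_{s\con i}$ nonempty open --- is in general impossible to satisfy. For $X=[0,1]$ with Lebesgue measure and $R=(\Q\cap[0,1])^2$, the relation $R$ is countable (so Borel, symmetric, and $\mu\times\mu$-null) and dense, so every nonempty open box meets $R$; Mycielski's theorem of course still holds here (take any Cantor set of irrationals), but no scheme of open sets satisfying (iii) exists. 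Worse, for $R=\{(x,y):x-y\in\Q\}$ the Steinhaus theorem forces $K_0\times K_1$ to meet $R$ whenever $K_0,K_1\subseteq[0,1]$ have positive Lebesgue measure, so even relaxing (iii) to compact sets of positive measure cannot rescue the scheme. A correct argument must abandon the attempt to make products of the scheme sets literally disjoint from $R$ (or from an open neighbourhood of it) at each finite stage; the proof in \cite{mycielski} proceeds along different lines.
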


In attempting to prove results of automatic continuity, the passage from $A$ to $AA\inv$ is
often without consequence, so the following consequence of Mycielski's Theorem could be a useful
reformulation of being right Haar null.
\begin{lemme}\label{mycielski}
Suppose $G$ is a Polish group and $A,B\subseteq G$ are universally measurable.
\begin{enumerate}
  \item If $A$ admits a perfect set of disjoint left translates, then $A$ is right  Haar null.
  \item If $AA\inv \subseteq B$ and $B$ is right Haar null, then $A$ has a
  perfect set of disjoint left translates.
\end{enumerate}
\end{lemme}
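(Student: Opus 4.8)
The plan is to base both parts on the elementary equivalence that, for $x,y\in G$,
\[
xA\cap yA=\tom \quad\Longleftrightarrow\quad y\inv x\notin AA\inv,
\]
valid because $xa_1=ya_2$ with $a_1,a_2\in A$ rearranges to $y\inv x=a_2a_1\inv$. I will assume throughout that $A\neq\tom$, the remaining case being immediate, and I write $j\colon G\til G$ for the inversion homeomorphism $g\mapsto g\inv$.

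For part (1), I would fix a non-empty perfect set $C\subseteq G$ with $xA\cap yA=\tom$ for all distinct $x,y\in C$, pick a continuous Borel probability measure $\nu$ carried by $C$ (possible since the perfect set $C$ is uncountable), view it as a measure on $G$, and take $\mu=j_*\nu$ to be its pushforward under $j$. The point is then to compute, for an arbitrary $g\in G$,
\[
\mu(Ag)=\nu\bigl(\{x\in C\del x\inv\in Ag\}\bigr)=\nu\bigl(C\cap g\inv A\inv\bigr),
\]
and to observe that $C\cap g\inv A\inv$ has at most one element: two distinct elements $x_1,x_2$ of it would satisfy $gx_i=a_i\inv$ with $a_i\in A$, hence $x_2\inv x_1=a_2a_1\inv\in AA\inv$, contradicting $x_1A\cap x_2A=\tom$. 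As $\nu$ is atomless this yields $\mu(Ag)=0$ for all $g$, i.e.\ $A$ is right Haar null.

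For part (2), I start from $AA\inv\subseteq B$ with $B$ right Haar null via some $\mu$, so $\mu(Bg)=0$ for all $g$. First I would replace $B$ by $B\cap B\inv$ --- still universally measurable, still right Haar null, and still containing the symmetric set $AA\inv$ --- so as to assume $B=B\inv$; the pushforward $\mu^*=j_*\mu$ then satisfies $\mu^*(gB)=\mu\bigl((gB)\inv\bigr)=\mu\bigl(Bg\inv\bigr)=0$ for every $g$, i.e.\ $B$ is \emph{left} Haar null via $\mu^*$. Since $1\in AA\inv\subseteq B$, for each $g$ we have $g\in gB$ and hence $\mu^*(\{g\})\leqslant\mu^*(gB)=0$; thus $\mu^*$ is continuous and $G$ is uncountable. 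Next I would form $R=\{(x,y)\in G^2\del y\inv x\in B\}$, which is universally measurable as the preimage of $B$ under the continuous map $(x,y)\mapsto y\inv x$ and whose vertical section over $y$ is $yB$; Fubini's theorem then gives $(\mu^*\times\mu^*)(R)=\int_G\mu^*(yB)\,d\mu^*(y)=0$. Finally I would apply Mycielski's Theorem~\ref{myc} to the uncountable Polish space $G$, the continuous measure $\mu^*$, and the relation $R$, obtaining a Cantor set $C\subseteq G$ with $y\inv x\notin B$, and so $y\inv x\notin AA\inv$, for all distinct $x,y\in C$; by the opening equivalence the left translates $\{xA\del x\in C\}$ are pairwise disjoint.

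The main obstacle is the left/right bookkeeping. The relation whose off-diagonal avoidance captures disjointness of the \emph{left} translates of $A$ has vertical sections that are \emph{left} translates of $B$, whereas the hypothesis controls only the \emph{right} translates of $B$; this is exactly why one first symmetrizes $B$ and then transports the witnessing measure through $j$. The remaining ingredients are routine: that preimages of universally measurable sets under continuous maps are universally measurable (so $R$ is $\mu^*\times\mu^*$-measurable and Fubini applies), and that any measure witnessing right Haar nullity of a set containing $1$ is automatically atomless (which is what makes Mycielski's Theorem applicable).
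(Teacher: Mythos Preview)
Your proof is correct and follows essentially the same strategy as the paper: for (1) both put a continuous measure on (the inverse of) the perfect set and note that each right translate of $A$ meets it in at most one point, and for (2) both use Fubini to show the relevant binary relation is $\mu\times\mu$-null and then invoke Mycielski's theorem. The only cosmetic difference is that you symmetrize $B$ and push the witnessing measure through inversion up front so that the Cantor set $C$ indexes the disjoint translates directly, whereas the paper works with the relation $xy\inv\in B$ and the original $\mu$, and then sets $P=C\inv$ at the end; you are also more explicit than the paper in checking that the witnessing measure is atomless (needed for Theorem~\ref{myc}), which the paper leaves implicit.
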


\begin{proof}
(1) Suppose $P\subseteq G$ is a non-empty perfect set such that for distinct $x,y\in P$, we have $xA\cap yA=\tom$. Then for any $z\in G$, as 
$$
x\inv, y\inv\in Az\;\equi\; z\inv \in xA\cap yA,
$$
we see that $Az$ intersects $P\inv$ in at most one point. Therefore, if $\mu$ is any continuous Borel probability measure supported on $P\inv$, we have $\mu(Az)=0$ for all $z\in G$, which shows that $A$ is right Haar null.

(2) On the other hand, if $AA\inv \subseteq B$ and $B$ is right Haar null, as witnessed by a Borel probability measure $\mu$ on $G$, we see that
$$
\mu(By)=0,\quad \textrm{ for $\mu$-a.e. $y\in G$},
$$
and so by Fubini's Theorem
$$
\mu\times \mu(\{(x,y)\in G^2\del xy\inv\in B\})=0.
$$
Therefore, by Theorem \ref{myc}, there is a Cantor set $C\subseteq G$ such that for distinct $x,y\in C$, we have $xy\inv \notin B\supseteq A A\inv$, i.e., $x\inv A\cap y\inv A=\tom$. Setting $P=C\inv$, we see that $A$ has a perfect set of disjoint left translates.
\end{proof}
One should note the use of the external set $B$ in (2) above. This is needed since there is in general no reason to conclude that $A\inv A$ is universally measurable from the fact that $A$ is universally measurable.

Now, for the proof of our theorems we shall need two results.
\begin{thm}[J.P.R. Christensen \cite{christensen}]\label{christensen}
Suppose $G$ is a Polish group and $A_n\subseteq G$ are universally measurable subsets covering $G$, i.e., $G=\bigcup_{n\in \N}A_n$. 
Then there are $n$ and elements $h_1,\ldots,h_m\in G$ such that
$$
h_1A_nA_n\inv h_1\inv\cup\ldots\cup h_m A_nA_n\inv h_m\inv
$$
is a neighbourhood of the identity in $G$.
\end{thm}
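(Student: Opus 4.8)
The plan is to prove the contrapositive: assuming that for \emph{every} $n$ and all $h_1,\dots,h_m\in G$ the set $h_1A_nA_n\inv h_1\inv\cup\dots\cup h_mA_nA_n\inv h_m\inv$ fails to be a neighbourhood of $1$, I will produce a single Borel probability measure $\mu$ on $G$ with $\mu(A_n)\leqslant 2^{-n-2}$ for all $n$; since $G=\bigcup_nA_n$ this forces $1=\mu(G)\leqslant\sum_n\mu(A_n)\leqslant\tfrac12$, a contradiction. This route bypasses the classical Steinhaus--Weil density argument (which breaks down here, as $g\mapsto\mu(gA)$ need not be continuous for a general Borel probability measure $\mu$) and, as a bonus, avoids having to first prove separately that the Haar null sets form a $\sigma$-ideal.

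The measure $\mu$ will be the law of an infinite product $Y_1Y_2Y_3\cdots$ of independent $G$-valued random variables, where $Y_n$ is uniformly distributed on a finite set $F_n$ constructed recursively. Fix a compatible left-invariant metric $d$ on $G$. Having chosen $F_1,\dots,F_{n-1}$, the (finite) set $\mathcal L_{n-1}:=F_1F_2\cdots F_{n-1}$ of possible values of $Y_1\cdots Y_{n-1}$ is determined, so $E_n:=\bigcup_{L\in\mathcal L_{n-1}}L\inv A_nA_n\inv L$ is a finite union of conjugates of $A_nA_n\inv$ and hence, by hypothesis, is not a neighbourhood of $1$; I then pick $F_n\subseteq\{g\in G:d(g,1)<2^{-n}\}$ with $1\in F_n$, $|F_n|\geqslant 2^{n+2}$, and $ff'\inv\notin E_n$ for all distinct $f,f'\in F_n$. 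Since $1\in F_n$ and $\operatorname{diam}_d(F_n)<2^{-n}$, consecutive partial products of the $Y_i$ differ in $d$ by less than $2^{-n}$, so $Y_1Y_2\cdots$ converges almost surely (with the standard precaution on the metric to keep the limit inside $G$), and $\mu$ is a well-defined Borel probability measure. To estimate $\mu(A_n)$, condition on all the coordinates other than the $n$-th: writing the product as $L\cdot Y_n\cdot R$ with $L=Y_1\cdots Y_{n-1}\in\mathcal L_{n-1}$ and $R=Y_{n+1}Y_{n+2}\cdots$, the conditional probability of landing in $A_n$ is $|F_n\cap L\inv A_nR\inv|/|F_n|$; and if distinct $f,f'\in F_n$ both belonged to $L\inv A_nR\inv$, then $(LfR)(Lf'R)\inv=Lff'\inv L\inv$ would lie in $A_nA_n\inv$, i.e.\ $ff'\inv\in L\inv A_nA_n\inv L\subseteq E_n$, against the choice of $F_n$. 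So the conditional probability is at most $1/|F_n|\leqslant 2^{-n-2}$, and averaging over the conditioning yields $\mu(A_n)\leqslant 2^{-n-2}$. (Universal measurability of $A_n$ is what makes $A_n$ and its translates $L\inv A_nR\inv$ measurable for $\mu$.)

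The crux --- and the step I expect to require real work --- is the recursive construction of the sets $F_n$, i.e.\ producing arbitrarily large finite subsets of a small ball around $1$ all of whose pairwise ``quotients'' $ff'\inv$ stay out of the forbidden set $E_n$. The obvious attempt, starting from $f_0=1$ and adjoining each new point $f_k$ so close to $1$ that $f_kf_l\inv$ --- which tends to $f_l\inv\notin E_n$ as $f_k\to 1$ --- remains outside $E_n$ for every earlier $f_l$, succeeds at once when $E_n$ is closed, since then $E_n=E_n\inv$ is a closed set that each $f_l\inv$ avoids and so a full neighbourhood of room is available; but in general $E_n$ is built from the difference set $A_nA_n\inv$ of a merely universally measurable set and need not be closed (indeed $A_nA_n\inv$ need not be universally measurable at all). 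Overcoming this --- for instance by showing that a finite union of conjugates of $\overline{A_nA_n\inv}$ is still not a neighbourhood of $1$, or by making do with $F_n$ that merely contain no \emph{large} subset with all pairwise quotients in $E_n$ --- is where the universal measurability of the $A_n$ genuinely enters (together, if needed, with the elementary part of Christensen's Haar-null theory, e.g.\ that some $A_n$ is not Haar null); the counting estimate and the convergence of the infinite product are by comparison routine.
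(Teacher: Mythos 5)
The paper offers no proof of this statement (it is quoted from Christensen, with \cite{automatic} cited for a complete proof of the variant Theorem~\ref{christensen2}), so your proposal must stand on its own --- and it does not. You have correctly set up the architecture of Christensen's argument: a Borel probability measure realized as the law of an infinite product of independent variables uniform on finite sets $F_n$, together with the observation that $LfR,Lf'R\in A_n$ forces $ff'^{-1}\in L^{-1}A_nA_n^{-1}L$, so that a conditioning estimate would give $\mu(A_n)\leqslant 1/|F_n|$ and a contradiction with $\mu(G)=1$. But the step you yourself flag as ``the crux'' is a genuine, unfilled gap, and it is the entire content of the theorem. You need, inside each ball around $1$, a set $F_n$ of size $2^{n+2}$ all of whose pairwise quotients avoid $E_n=\bigcup_{L}L^{-1}A_nA_n^{-1}L$. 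The greedy construction requires the $k$-th point to lie outside $\bigcup_{l<k}E_nf_l$; these are \emph{right translates} of conjugates of $A_nA_n^{-1}$, about which the contrapositive hypothesis (no finite union of \emph{conjugates} of $A_nA_n^{-1}$ is a neighbourhood of $1$) says nothing. Your first suggested repair --- that a finite union of conjugates of $\overline{A_nA_n^{-1}}$ is still not a neighbourhood of $1$ --- is false: if $A_n$ is a countable dense subgroup of an uncountable $G$, every finite union of conjugates of $A_nA_n^{-1}=A_n$ is countable, hence never a neighbourhood of $1$, yet $\overline{A_nA_n^{-1}}=G$. Your second suggestion (a Ramsey-type weakening) is not carried out. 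In effect you have reduced the theorem to a combinatorial lemma that is at least as hard as the theorem itself.

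The standard proof sidesteps the construction of large separated sets entirely. One takes two-point sets $F_k=\{1,g_k\}$, choosing $g_k$ close to $1$ and outside the finitely many conjugates $y^{-1}A_nA_n^{-1}y$, where $y$ ranges over the partial products of $g_1,\ldots,g_{k-1}$ and $n\leqslant k$ --- a choice the hypothesis does permit. One then shows $\mu(A_n)=0$ not by a direct counting bound but by a martingale (Lebesgue density) argument on $2^{\mathbb N}$: if $\mu(A_n)>0$, there is a cylinder $[s]$ with $|s|\geqslant n$ on which the conditional probability of $A_n$ exceeds $3/4$; since flipping the single coordinate $|s|+1$ is measure-preserving, some $\alpha$ has both $x_\alpha$ and $x_{T\alpha}$ in $A_n$, where $T$ flips that one coordinate. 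Because the two points differ in exactly one coordinate, the infinite tails cancel exactly and $x_{T\alpha}x_\alpha^{-1}=y_s\,g_{|s|+1}\,y_s^{-1}\in A_nA_n^{-1}$, contradicting the choice of $g_{|s|+1}$. This exact cancellation of the future factors is what replaces the (unavailable) topological room around the complement of $A_nA_n^{-1}$; without some device of this kind, your construction of the sets $F_n$ cannot be completed.
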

The second result we need is proved by just slightly amending Christensen's proof of Theorem \ref{christensen} (see \cite{automatic} for a complete proof of the exact statement below).

\begin{thm}\label{christensen2}
Suppose $G$ is a Polish group and $A\subseteq G$ is a universally measurable
subset which is not right Haar null. Then for any open $W\ni 1$ there are finitely many
$h_1,\ldots,h_n\in W$ such that
$$
h_1AA\inv h_1\inv\cup\ldots\cup h_n AA\inv h_n\inv
$$
is a neighbourhood of the identity.
\end{thm}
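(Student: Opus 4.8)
The plan is to fix an open $W\ni 1$ and prove the contrapositive of the theorem: assuming that for every $n$ and all $h_1,\dots,h_n\in W$ the set $h_1AA\inv h_1\inv\cup\dots\cup h_nAA\inv h_n\inv$ fails to be a neighbourhood of $1$, I will produce a Borel probability measure $\mu$ on $G$ with $\mu(Ax)=0$ for every $x\in G$. Following Christensen's proof of Theorem~\ref{christensen}, $\mu$ will be an infinite convolution $\mu=\nu_1*\nu_2*\cdots$ of two-point measures $\nu_j=\tfrac{1}{2}(\delta_1+\delta_{a_j})$, where the $a_j$ are chosen to tend to $1$ rapidly enough that the convolution converges; equivalently, $\mu=\phi_*\lambda$, where $\lambda$ is the coin-tossing measure on $\ca$ and $\phi(\epsilon)=\lim_k a_1^{\epsilon_1}\cdots a_k^{\epsilon_k}$. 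The governing observation is this: writing $P_k$ for the set of products $a_1^{\epsilon_1}\cdots a_k^{\epsilon_k}$ with $(\epsilon_1,\dots,\epsilon_k)\in\{0,1\}^k$, and letting $\tau_k$ denote the tail convolution $\nu_{k+1}*\nu_{k+2}*\cdots$, one has $\mu(Ax)=\tfrac{1}{2^k}\sum_{p\in P_k}\tau_k(p\inv Ax)$ for every $k$ and every $x$. If the members of $P_k$ are pairwise \emph{$AA\inv$-apart}, meaning $pq\inv\notin AA\inv$ for distinct $p,q\in P_k$, then for each fixed $x$ the sets $p\inv Ax$ $(p\in P_k)$ are pairwise disjoint --- since $p\inv Ax$ and $q\inv Ax$ meet precisely when $pq\inv\in AA\inv$ --- so the sum is at most $\tau_k(G)=1$ and hence $\mu(Ax)\leqslant 2^{-k}$. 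As $k$ was arbitrary, $\mu(Ax)=0$.

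Thus everything comes down to choosing the $a_j$ so that (a) the partial products converge in a fixed complete compatible metric on $G$ and the whole of $P_k\cup P_k\inv$ stays inside $W$, and (b) the $AA\inv$-apartness of $P_k$ is maintained at every stage. Point (a) is arranged by taking each $a_j$ in a sufficiently small neighbourhood of $1$ (shrinking with $j$); this is routine bookkeeping about infinite products in a Polish group, and it is also what keeps every conjugating element occurring below inside $W$, since those elements are exactly the partial products. For (b), in passing from $P_k$ to $P_{k+1}=P_k\cup P_ka_{k+1}$ the only genuinely new requirement is that $pa_{k+1}$ and $q$ be $AA\inv$-apart for all $p,q\in P_k$, which unwinds to the single demand that $a_{k+1}$ lie outside the finite union $\bigcup_{p,q\in P_k}p\inv AA\inv q$. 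Each $p\inv AA\inv q$ is a right translate of the conjugate $p\inv AA\inv p$, and --- crucially --- since $P_k$ and $P_k\inv$ lie in $W$, the contrapositive hypothesis applies to the diagonal part $\bigcup_{p\in P_k}p\inv AA\inv p$ and tells us it is not a neighbourhood of $1$. This is precisely the point at which Christensen's argument is amended: in Theorem~\ref{christensen} there is no restriction on the conjugators and the two-sided conclusion is enough, whereas here the conjugators must be kept inside the prescribed $W$ --- possible because they are the partial products, which we control --- and one only ever invokes $AA\inv$ on a single side, which is why the conclusion speaks of ``right Haar null''.

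The main obstacle is exactly step (b), and it is genuine: the set $AA\inv$ need not be closed, Borel, or even universally measurable, so one cannot separate by passing to open neighbourhoods, and a finite union of non-neighbourhoods of $1$ may well be a neighbourhood of $1$. The delicate terms are the off-diagonal pieces $p\inv AA\inv q$ with $p\neq q$, which are translates of conjugates of $AA\inv$ rather than conjugates, so the hypothesis does not bear on them directly. One checks that each is individually not a neighbourhood of $1$ --- indeed $p\inv AA\inv q$ is a neighbourhood of $1$ exactly when $pq\inv\in\operatorname{int}(AA\inv)$, which is excluded since $pq\inv\notin AA\inv$ by the apartness already in force --- but one must still rule out that their finite union, together with the genuine conjugates, is a neighbourhood of $1$; this is the heart of the matter, and is handled either by a careful fattening argument, legitimate because only finitely many conjugates are ever in play so that slightly enlarging the finite family stays within the reach of the hypothesis, or else by routing the required disjointification through Mycielski's Theorem~\ref{myc} in the same spirit as the proof of Lemma~\ref{mycielski}(2). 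Granting this combinatorial core --- which is Christensen's, only lightly amended, and written out in full in \cite{automatic} --- the remaining matters, namely convergence of the infinite convolution and the bookkeeping that keeps the partial products inside $W$, are routine.
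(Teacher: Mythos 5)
Your overall architecture is the right one --- prove the contrapositive by building $\mu$ as an infinite convolution of two-point measures $\tfrac12(\delta_1+\delta_{a_j})$ with the partial products kept inside $W$ --- but the proof has a genuine gap exactly where you say the heart of the matter lies, and neither of your two suggested repairs closes it. Your bookkeeping demands that \emph{all} $2^k$ translates $p\inv Ax$, $p\in P_k$, be pairwise disjoint, which forces $a_{k+1}$ to avoid every two-sided translate $p\inv AA\inv q$ with $p,q\in P_k$. The hypothesis only says that finite unions of \emph{conjugates} $h AA\inv h\inv$ with $h\in W$ are not neighbourhoods of $1$; it says nothing about finite unions containing the off-diagonal sets $p\inv AA\inv q$, and a finite union of sets each failing to be (or even to contain) a neighbourhood of $1$ can perfectly well be a neighbourhood of $1$. ``Fattening'' the finite family of conjugators cannot help, because $p\inv AA\inv q$ is simply not of the form $hAA\inv h\inv$ for any $h$; and Mycielski's theorem cannot be invoked, because it presupposes a measure making the relevant relation null --- precisely the object you are in the middle of constructing.

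The actual amendment of Christensen's argument avoids the off-diagonal terms altogether by never asking for pairwise disjointness. Choose the $a_j\in W$ so that, besides the convergence conditions keeping all partial products $p_s$ ($s\in\{0,1\}^k$) and their inverses in $W$, one has only the \emph{diagonal} condition $a_{k+1}\notin\bigcup_{s\in\{0,1\}^k}p_s\inv AA\inv p_s$, which the contradiction hypothesis makes possible. Then, given any $x$ with $\mu(Ax)>0$, set $B=Ax$ and apply L\'evy's martingale convergence theorem (equivalently, a density argument) on the coin-tossing space: there are $k$ and $s\in\{0,1\}^k$ with $\lambda\big(\phi\in B\mid [s]\big)>3/4$, hence both one-step refinements $[s0]$ and $[s1]$ give conditional probability $>1/2$, and since flipping the $(k+1)$-st coin is a measure-preserving bijection of $[s0]$ onto $[s1]$ satisfying $\phi(T\epsilon)=p_s\,a_{k+1}\,p_s\inv\,\phi(\epsilon)$, there is some $\epsilon$ with both $\phi(\epsilon)\in B$ and $\phi(T\epsilon)\in B$. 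This yields $p_s a_{k+1}p_s\inv\in BB\inv=AA\inv$, i.e.\ $a_{k+1}\in p_s\inv AA\inv p_s$, contradicting the choice of $a_{k+1}$. Thus $\mu(Ax)=0$ for all $x$ and $A$ is right Haar null. The moral is that one compares only pairs of points of the Cantor set differing in a \emph{single} coordinate, whose ``quotient'' is exactly a conjugate of $a_{k+1}$ by a partial product in $W$; your formulation compares arbitrary pairs, which is what drags in the uncontrollable two-sided translates.
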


We now come to the proof of Theorem \ref{main}.

\begin{proof}
Suppose $F\leqslant G$ is a universally measurable subgroup and that $F$ does not have perfect index in $G$.
Then, by Lemma \ref{mycielski}, $F$ cannot be right Haar null. So, by Theorem \ref{christensen2}, there are $g_1,\ldots,g_n\in G$ such that 
$$
V=g_1\inv Fg_1 \cup\ldots\cup g_n\inv Fg_n
$$ 
is a neighbourhood of $1$ in $G$, and, in particular, there is a countable set $D\subseteq G$ such that
$$
G=DV=DFg_1\cup\ldots\cup DFg_n.
$$
We claim that  $F$ has countable index in $G$. For if $G\neq DF=DFg_1g_1\inv F\inv$, pick some $a\in G\setminus DFg_1g_1\inv F\inv$, whereby $aFg_1\cap DFg_1=\tom$, and so
$$
aFg_1\subseteq DFg_2\cup\ldots \cup DFg_n
$$
and
$$
G=(Da\inv D\cup D)Fg_2\cup\ldots \cup (Da\inv D\cup D)Fg_n.
$$
Continuing this way, we either see that $F$ covers $G$ by a countable number of left translates or that
$$
G=D'Fg_n
$$
for some countable set $D'$, whereby also $G=D'F$. In any case, $F$ has countable index in $G$.

Now let the $g_i$  and $V$ be chosen as above. Then 
$$
K=g_1\inv Fg_1 \cap\ldots\cap g_n\inv Fg_n
$$
is a subgroup of countable index in $G$, whereby $\overline K$ is a closed subgroup of countable index and hence is open in $G$ by the Baire Category Theorem.
We then have
\begin{displaymath}\begin{split}
\overline K\subseteq VK&= (g_1\inv Fg_1 \cup\ldots\cup g_n\inv Fg_n)(g_1\inv Fg_1 \cap\ldots\cap g_n\inv Fg_n)\\
&\subseteq g_1\inv Fg_1 \cup\ldots\cup g_n\inv Fg_n .
\end{split}\end{displaymath}
By Neumann's Lemma this implies that there is some $i$ such that $L=g_i\inv Fg_i\cap \overline K$ has finite index in $\overline K$. By a lemma of Poincar\'e, it follows that there are finitely many $k_1,\ldots, k_m\in \overline K$ such that $M=\bigcap_{i=1}^mk_iLk_i\inv$ is a finite index, normal subgroup of $\overline K$. So, as it is also universally measurable, it is open in $\overline K$ by Christensen's Theorem. Since $\overline K$ is itself  open in $G$ this shows that $M$, $g_i\inv Fg_i$ and thus also $F$ are open in $G$.

Now suppose $\pi\colon G\til S_\infty$ is a universally measurable homomorphism. Then if $V\ni 1$ is a neighbourhood of the identity in $S_\infty$, there is an open subgroup $H\leqslant S_\infty$ of countable index such that $H\subseteq V$. But then $U=\pi\inv(H)$ is a universally measurable, countable index subgroup of $G$ and hence open. Since $\pi(U)\subseteq V$, this shows that $\pi$ is continuous at $1$ and therefore continuous everywhere. 
\end{proof}

Using this, we can extract some additional information.

\begin{thm}
Let $A$ be a universally measurable, symmetric subset of a Polish group $G$ containing $1$ and covering $G$ by countably many left translates. Then there is a $k$ such that
$A^k$ is a neighbourhood of $1$.
\end{thm}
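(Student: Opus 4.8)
The plan is as follows. Since $G=\bigcup_n d_nA$ is covered by the countably many universally measurable left translates $d_nA$, and every Borel probability measure on $G$ is finite, $A$ cannot be left Haar null: a measure $\mu$ with $\mu(xA)=0$ for all $x\in G$ would give $\mu(G)\leqslant\sum_n\mu(d_nA)=0$. As $A=A\inv$, it follows at once that $A$ is not right Haar null, so by Theorem \ref{christensen2}, for \emph{every} open $W\ni 1$ there are $h_1,\ldots,h_m\in W$ with
\[
h_1AA\inv h_1\inv\cup\cdots\cup h_mAA\inv h_m\inv=h_1A^2h_1\inv\cup\cdots\cup h_mA^2h_m\inv
\]
a neighbourhood of $1$ (using $AA\inv=A^2$ by symmetry).

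I would first reduce the problem to showing that the subgroup $F:=\langle A\rangle=\bigcup_k A^k$ is open; note $F$ has countable index, since $G=DA\subseteq DF$. Granting $F$ open, $F$ is Polish, and $A$ remains universally measurable, symmetric, and not right Haar null inside $F$; applying Theorem \ref{christensen2} \emph{inside $F$} with the open neighbourhood $W=F$ itself, we obtain $h_1,\ldots,h_m\in F=\bigcup_k A^k$, hence $h_i\in A^t$ (and therefore $h_i\inv\in A^t$, by symmetry) for a common $t$. Then $h_1A^2h_1\inv\cup\cdots\cup h_mA^2h_m\inv\subseteq A^tA^2A^t=A^{2t+2}$, so $A^{2t+2}$ is a neighbourhood of $1$.

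The heart of the matter is therefore the openness of $F=\langle A\rangle$. If $F$ itself were universally measurable this would be immediate from Theorem \ref{main}; in general I would pass to the closure. The subgroup $\langle\bar A\rangle=\bigcup_k\bar A^k$ is a countable union of analytic sets (each $\bar A^k$ is the continuous image of the closed set $\bar A\times\cdots\times\bar A$), hence universally measurable, and it has countable index, so by Theorem \ref{main} it is open; replacing $G$ by $\langle\bar A\rangle$ we may assume $\bar A$ generates $G$. Then $G=\bigcup_k\bar A^k$, so by the Baire category theorem some $\bar A^{k_0}$ is non-meager; being analytic it has the Baire property, whence by Pettis' theorem $\bar A^{k_0}(\bar A^{k_0})\inv=\bar A^{2k_0}$ is a neighbourhood of $1$.

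The step I expect to be the main obstacle is the last one: transferring this neighbourhood, built out of the closure $\bar A$, back to a power of $A$ itself — equivalently, showing that the now-dense subgroup $\langle A\rangle$ is actually open. This is exactly where one must exploit that $A$ is not right Haar null, rather than merely non-meager: without that hypothesis the conclusion is false, as $A=\Q\leqslant\R$ shows (there $\langle A\rangle$ is not open). One would hope to carry it out by combining the density of $A^k$ in $\bar A^k$ with one more application of Theorem \ref{christensen2}, this time choosing the conjugating elements inside $\operatorname{int}(\bar A^{2k_0})$ and approximating each of them by an element of a single fixed power of $A$, so that the union $\bigcup_i h_iA^2h_i\inv$ is forced into one $A^k$.
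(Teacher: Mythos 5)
Your reduction in the second paragraph is sound and is essentially the paper's own endgame: once the subgroup $F=\langle A\rangle=\bigcup_k A^k$ is known to be open, an application of Theorem \ref{christensen2} with $W=F$ (the paper uses Theorem \ref{christensen} applied inside $F$, to the same effect) forces the conjugating elements into $F$, hence into a single power $A^t$, and yields that $A^{2t+2}$ is a neighbourhood of $1$. Your preliminary observation that $A$ is not left Haar null, hence by symmetry not right Haar null, is also correct.

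The gap is exactly where you say you expect it: you never establish that $F$ is open, and the detour through $\overline A$ does not close. Knowing that $\langle\overline A\rangle$ is open and that $\overline A^{2k_0}$ is a neighbourhood of $1$ gives no control over powers of $A$ itself; transferring from the closure back to $A$ is the whole difficulty, and your final paragraph only expresses a hope of carrying it out. The fix is far simpler than anything involving $\overline A$: contrary to your worry, $F$ \emph{is} universally measurable. Write $G=\bigcup_n d_nA$. Since $1\in A\subseteq F$ and $F$ is a subgroup, a translate $d_nA$ meets $F$ if and only if $d_n\in F$ (if $d_na\in F$ with $a\in A\subseteq F$ then $d_n\in F$; conversely $d_n=d_n\cdot 1\in d_nA$), and in that case $d_nA\subseteq F$. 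Hence $F=\bigcup_{d_n\in F}d_nA$ is a countable union of left translates of the universally measurable set $A$, so universally measurable, of countable index, and therefore open by Theorem \ref{main} (perfect index is impossible for a countable-index subgroup). With that one observation your second paragraph completes the proof, and the passage through $\overline A$, Baire category and Pettis' theorem can be deleted.
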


\begin{proof}
Let $H=A^{<\om}$ be the set of all finite products of elements in $A$ and notice that $H$ is a subgroup of $G$ of countable index. Notice also that as $A\subseteq H$, we have 
$$
gA\cap H\neq \tom \equi g\in H.
$$
So if $G=\bigcup_{n\in \N}g_nA$, we see that $H=\bigcup_{g_n\in H}g_nA$, i.e., $H$ is a countable union of universally measurable sets, and therefore universally measurable itself. So, by Theorem \ref{main}, $H$ is open in $G$. Using Theorem \ref{christensen}, we find $h_1,\ldots, h_m\in H$ such that
$$
h_1\inv A^2h_1\cup\ldots\cup h_m\inv A^2h_m
$$
is a neighbourhood of $1$ and letting $k$ be large enough such that $h_i\in A^k$ for all $i$, we see that $A^{2k+2}$ is a neighbourhood of $1$.
\end{proof}

The above result is short of solving Christensen's problem by only an alternation of quantifiers. That is, if for any Polish group $G$ one could find a $k$ that works simultaneously for all $A\subseteq G$, then this would suffice to prove that any universally measurable homomorphism between Polish groups is continuous. However, the techniques presented here do not seem to suffice for this.

Instead, one could make an attempt at other test questions. One case where the group theoretical approach used above might be successful is for homomorphisms defined on $S_\infty$.
 \begin{prob}
 Suppose $G$ is a closed subgroup of $S_\infty$ and 
 $$
 \pi\colon G\til H
 $$
 is a universally measurable homomorphism from $G$ into a Polish group $H$. Is $\pi$ continuous?
 \end{prob}
 We should mention that for $G=S_\infty$ and many other specific closed subgroups the answer is positive, in fact, any homomorphism (universally measurable or not) from $S_\infty$ into a Polish group is continuous (see \cite{turbulence}). So the question is really only interesting when all closed subgroups $G$ are considered.

\

Our second result, Theorem \ref{main2}, which we will prove now, deals with locally compact groups and will rely on both Theorems \ref{christensen} and \ref{christensen2}.

\begin{proof}
Suppose $G$ is Polish, $H$ is second countable, locally compact and $\pi\colon G\til H$ is a universally measurable homomorphism. Since $H$ is second countable, we can find symmetric compact sets 
$$
K_0\subseteq K_1\subseteq K_2\subseteq \ldots \subseteq H=\bigcup_nK_n
$$ 
covering $H$ and such that $K_nK_n\subseteq K_{n+1}$ for all $n$.

Set $A_n=\pi\inv(K_n)$. Then 
$$
A_0\subseteq A_1\subseteq A_2\subseteq\ldots \subseteq G=\bigcup_nA_n
$$ 
is a chain of universally measurable subsets covering $G$. So, by Theorem \ref{christensen}, there is some $n$ and $g_1,\ldots,g_n\in G$ such that 
$$
g_1A_nA_n\inv g_1\inv \cup \ldots \cup g_nA_nA_n \inv g_n\inv
$$
is a neighbourhood of $1$ in $G$.
Let $m\geqslant n$ be large enough such that $g_1,\ldots,g_n\in A_m$. Then, 
$$
g_1A_nA_n\inv g_1\inv \cup \ldots \cup g_nA_nA_n \inv g_n\inv\subseteq A_mA_mA_m\inv A_m\inv \subseteq A_{m+2}
$$
and so $A_{m+2}$ is a neighbourhood of $1$.

To see that $\pi$ is continuous, it suffices to prove continuity at $1\in G$. So suppose $V$ is any neighbourhood of $1\in H$ and find some smaller open neighbourhood $U\ni 1$ such that for any $h\in K_{m+2}$, 
$$
hUU\inv h\inv\subseteq V.
$$ 
This is possible, since $K_{m+2}$ is compact.

Now, $\pi\inv(U)$ is universally measurable and covers $G$ by countably many right translates, so    $\pi\inv(U)$ is not right Haar null. Thus, by Theorem \ref{christensen2}, there are finitely many $f_1,\ldots,f_p\in A_{m+2}$ so that 
$$
W=f_1\pi\inv(U)\pi\inv(U)\inv f_1\inv\cup\ldots\cup f_p\pi\inv(U)\pi\inv(U)\inv f_p\inv
$$
is a neighbourhood of $1$ in $G$. But $\pi(f_i)\in K_{m+2}$, so
$$
\pi(W)\subseteq \pi(f_1)UU\inv \pi(f_1)\inv\cup\ldots\cup \pi(f_p)UU\inv \pi(f_p)\inv\subseteq V.
$$
Since $V$ is arbitrary, this shows that $\pi$ is continuous at $1$. 
\end{proof}

\end{document}